\theoremstyle{plain}
\newtheorem{thm}{Theorem}[section]
\newtheorem{lemma}[thm]{Lemma}
\theoremstyle{definition}
\theoremstyle{remark}
\newtheorem{remark}[thm]{Remark}
\newcommand{\nc}{\newcommand}
\def\makeop#1{\expandafter\def\csname#1\endcsname
  {\mathop{\rm #1}\nolimits}\ignorespaces}
\def\makebb#1{\expandafter\def
  \csname bb#1\endcsname{{\mathbb{#1}}}\ignorespaces}
\def\makebf#1{\expandafter\def\csname bf#1\endcsname{{\bf
      #1}}\ignorespaces} 
\def\makegr#1{\expandafter\def
  \csname gr#1\endcsname{{\mathfrak{#1}}}\ignorespaces}
\def\makescr#1{\expandafter\def
  \csname scr#1\endcsname{{\EuScript{#1}}}\ignorespaces}
\def\makecal#1{\expandafter\def\csname cal#1\endcsname{{\mathcal
      #1}}\ignorespaces} 
\def\doLetters#1{#1A #1B #1C #1D #1E #1F #1G #1H #1I #1J #1K #1L #1M
                 #1N #1O #1P #1Q #1R #1S #1T #1U #1V #1W #1X #1Y #1Z}
\def\doletters#1{#1a #1b #1c #1d #1e #1f #1g #1h #1i #1j #1k #1l #1m
                 #1n #1o #1p #1q #1r #1s #1t #1u #1v #1w #1x #1y #1z}
     \def\qed{\qedmark\medbreak}%
\def\qedmark{{\enspace\vrule height 6pt width 5pt depth 1.5pt}}%
\newcommand{\A}{\mathbb A}    
\newcommand{\<}{\langle}   
\renewcommand{\>}{\rangle} 
\nc{\embed}{\hookrightarrow}
\nc{\ol}{\overline}
\nc{\wt}{\widetilde}
\nc{\opp}{\mathrm{opp}}
\def\wh{\widehat}
\begin{document}
\renewcommand{\thefootnote}{\fnsymbol{footnote}}
\setcounter{footnote}{-1}
\numberwithin{equation}{section}


\title{A remark on Chevalley's ambiguous class number formulas}
\author{Chia-Fu Yu}
\address{
Institute of Mathematics, Academia Sinica and NCTS (Taipei Office)\\
Astronomy Mathematics Building \\
No. 1, Roosevelt Rd. Sec. 4 \\ 
Taipei, Taiwan, 10617} 
\email{chiafu@math.sinica.edu.tw}



\date{\today}
\subjclass[2010]{} 
\keywords{}


\begin{abstract}
  In this note we remark that
  Chevalley's ambiguous class number formula 
  is an immediate consequence of the Hasse norm theorem, the local and
  global norm index theorems for cyclic extensions.     
\end{abstract} 

\maketitle


\section{Introduction}
\label{sec:01}

The ambiguous ideal classes are the Galois invariants of 
the class group of a cyclic extension of a number field. 
In \cite{chevalley} Chevalley gave a formula for the
ambiguous class number.
Besides Chevalley's original paper, one can also find 
for Chevalley's ambiguous class formula 
in Gras' book (see \cite[p.~178, p.~180]{gras:cft}).
Lemmermeyer \cite{lemmermeyer:amb} supplied a modern and
elementary proof as its proof does not seem to appear in most textbooks
of algebraic number theory. 
In this note we observe that this formula can be also 
deduced very easily from 
the Hasse norm theorem, the local and
global norm index theorems for cyclic extensions.  
The proofs of these standard theorems can be found in most 
textbooks of algebraic number theory, for example in Lang's book 
\cite{lang:ant}. We should note that our proof is not completely
independent of the proofs appeared before as 
the $Q$-machine of Herbrand is also applied in the proof of the local
norm index theorem.  

We now state the ambiguous class number formula. 
Consider a cyclic extension $K/k$ of number fields 
with cyclic Galois group $G=\Gal(K/k)=\<\sigma\>$ of generator $\sigma$.
Denote by $\gro$ and $\grO$ the ring of integers of
$k$ and $K$, respectively. 
Let $\infty$ and $\infty_r$ 
(resp. $\wt \infty$ and $\wt \infty_r$) denote the set
of infinite and real places of $k$
(resp. of $K$), respectively, and $\A_k$ (resp. $\A_K$) the adele ring
of $k$ (resp. $K$). Let $r_k:\wt
\infty \to \infty$ denote the restriction to $k$.
A real cycle is a cycle which is supported in the set of real places 
\cite[p.~123]{lang:ant}. We may identify a real cycle 
with its support, which is a subset of real places.    

Suppose that  $\wt \grc$ is a real cycle on $K$ which is stable under the
$G$-action. Denote by
\begin{equation}
  \label{eq:ClKc}
  \Cl(K,\wt \grc):=\frac{\A_K^\times}{K^\times \wh \grO^\times
  K_\infty(\wt \grc)^\times}
\end{equation}
the narrow ideal class group of $K$ with respect to $\wt \grc$, where $\wh
\grO$ is the profinite completion of $\grO$, and 
$ K_\infty(\wt \grc)^\times =\{a=(a_w) \in K_\infty^\times \mid a_w
>0 \quad \forall\, w| \wt \grc \}.$ 
Similarly one defines $\Cl(k,\grc)$ for any real cycle $\grc$ on $k$. 
The group $G$ acts on the finite abelian group $\Cl(K,\wt \grc)$. 
Its $G$-invariant subgroup $\Cl(K,\wt \grc)^G$ is 
called the {\it ambiguous ideal class group} (with respect to $\wt \grc$). 

Let $\grc$ be the real cycle on $k$ such that $\infty_r-\grc=r_k(\wt
\infty_r-\wt \grc)$,  and let $\grc_0:=r_k(\wt \grc)$. 
One has $\grc=\grc_0
\infty_r^c$, where $\infty_r^c$ is the set of real places of $k$ which does
not split completely in $K$. Let
$N_{K/k}$ denote the norm map from $K$ to $k$. The cycle $\grc$ is
determined by the property 
$N_{K/k}(K_\infty(\wt \grc)^\times)=k_\infty(\grc)^\times$. 
Put
$\gro(\grc)^\times:=\gro^\times \cap
i_\infty^{-1}(k_\infty(\grc)^\times)$, where $i_\infty:k^\times \to
k_\infty^\times$ is the diagonal embedding.    
Denote by
$V_f$ the set of finite places of $k$. Let $e(v)$  denote
the ramification index of any place $w$ over $v\in V_f$. 

\begin{thm}\label{form_ClG}
  One has 
  \begin{equation}
    \label{eq:form_ClG}
    \# \Cl(K,\wt \grc)^G=\frac{\#\Cl(k,\grc) \prod_{v\in V_f}
    e(v)}{[K:k][\gro(\grc)^\times : \gro(\grc)^\times \cap
    N_{K/k}(K^\times)]}. 
  \end{equation}
\end{thm}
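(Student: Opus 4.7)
My plan is to take Galois cohomology of the short exact sequence of $G$-modules
\[
1 \to U/u \to C_K \to \Cl(K, \wt \grc) \to 1,
\]
where $U = \wh{\grO}^\times K_\infty(\wt \grc)^\times$, $u = U \cap K^\times = \grO(\wt\grc)^\times$, and $C_K = \bbA_K^\times / K^\times$ is the idele class group. Hilbert 90 for $K^\times$ gives $C_K^G = C_k$, and $H^1(G, C_K) = 0$ is a standard consequence of the Hasse norm theorem together with the local and global norm index theorems. The long exact sequence thus collapses to
\[
1 \to \bbA_k^\times / (\bbA_k^\times \cap K^\times U) \to \Cl(K, \wt\grc)^G \to H^1(G, U/u) \to 1.
\]
The first term is naturally the image of the extension map $\iota\colon \Cl(k, \grc) \to \Cl(K, \wt\grc)$, so
\[
|\Cl(K, \wt\grc)^G| = \frac{|\Cl(k,\grc)|}{|\ker \iota|} \cdot |H^1(G, U/u)|.
\]

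To compute $|H^1(G, U/u)|$, I would use the long exact sequence of $1 \to u \to U \to U/u \to 1$. By Shapiro's lemma $H^1(G, U) = \bigoplus_v H^1(G_w, U_w)$; at a finite place $v$ the valuation sequence $1 \to \grO_w^\times \to K_w^\times \to \bbZ \to 0$ combined with local Hilbert 90 yields $H^1(G_w, \grO_w^\times) \cong \bbZ/e(v)\bbZ$, and the archimedean contributions to $H^1$ vanish. The Herbrand quotient of $u$ is computed from Dirichlet's unit theorem and the $G$-structure on the logarithmic embedding, while $h(U/u) = [K:k]$ follows either from Herbrand multiplicativity applied to the first sequence together with $h(C_K) = [K:k]$ (the global norm index theorem), or directly from $h(U)/h(u)$. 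These together pin down $|H^1(G, U/u)|$ in terms of $\prod_v e(v)$ and archimedean quantities.

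To compute $|\ker \iota| = |(\bbA_k^\times \cap K^\times U)/(k^\times V)|$, where $V = \wh\gro^\times k_\infty(\grc)^\times$, I decompose $\alpha = \eta\xi \in \bbA_k^\times$ with $\eta \in K^\times$, $\xi \in U$ and apply $N_{K/k}(\alpha) = \alpha^{[K:k]}$. The Hasse norm theorem identifies $\gro(\grc)^\times \cap N_{K/k}(\bbA_K^\times) = \gro(\grc)^\times \cap N_{K/k}(K^\times)$, producing the unit index in the denominator, and the factor $[K:k]$ enters through the global norm index. The main obstacle is bookkeeping: archimedean factors such as $2^{|\infty_r^c|}$ appear in both $|H^1(G, U/u)|$ and $|\ker \iota|$ and must cancel to produce the clean formula, and one must carefully handle the interplay among the cycles $\grc_0$, $\grc$, and $\wt\grc$, particularly the sign conditions at real places of $k$ that become complex in $K$.
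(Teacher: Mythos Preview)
Your cohomological approach is sound and would reach the formula, but it is genuinely different from the paper's argument --- in fact it is close to the ``standard'' proofs the paper is offering an alternative to.  The paper avoids group cohomology entirely: rather than studying $\Cl(K,\wt\grc)^G$ through the long exact sequence, it introduces the \emph{norm class group}
\[
N(K,\wt\grc)=\frac{N_{K/k}(\bbA_K^\times)}{N_{K/k}(K^\times\,\wh\grO^\times K_\infty(\wt\grc)^\times)}
\]
and observes, via two tautological short exact sequences built from the map $1-\sigma$, that $\#\Cl(K,\wt\grc)^G=\#N(K,\wt\grc)$.  The Hasse norm theorem then identifies $N(K,\wt\grc)$ with an index-$[K:k]$ subgroup (global norm index) of an auxiliary group $\Cl(k,\grc,\grO)=\bbA_k^\times/\bigl(k^\times k_\infty(\grc)^\times N_{K/k}(\wh\grO^\times)\bigr)$, and a single four-term exact sequence compares this to $\Cl(k,\grc)$, with the local norm index supplying $\prod_v e(v)$.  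Because everything on the norm side lives over $k$, the archimedean bookkeeping you flag --- the cancellation of factors $2^{|\infty_r^c|}$ between $|H^1(G,U/u)|$ and $|\ker\iota|$, the interaction of $\grc_0,\grc,\wt\grc$ --- never arises; it is absorbed into the single identity $N_{K/k}(K_\infty(\wt\grc)^\times)=k_\infty(\grc)^\times$.  Your route also requires the Herbrand quotient of the global unit group $u=\grO(\wt\grc)^\times$ via the Dirichlet lattice, which the paper's argument never touches.  What your approach buys in return is structural information: you exhibit $\Cl(K,\wt\grc)^G$ explicitly as an extension of $H^1(G,U/u)$ by the image of $\Cl(k,\grc)$, whereas the paper obtains only a cardinality equality and in fact remarks afterwards that it does not know whether $\Cl(K,\wt\grc)^G$ and $N(K,\wt\grc)$ are isomorphic as abelian groups.
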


When $\wt \grc=\wt \infty_r$, we get the restricted version of the
formula stated in \cite[p.~178]{gras:cft}. 
When $\wt \grc=\emptyset$, using an elementary fact
\[ \# \Cl(k,\infty_r^c)=\frac{h(k)\cdot
  2^{|\infty_r^c|}}{[\gro^\times:\gro(\infty_r^c)^\times]}, \]
we get the ordinary version of the formula stated  
in \cite[p.~180]{gras:cft}. 

\section{Proof of Theorem~\ref{form_ClG}}
\label{sec:02}

Define the norm ideal class group $N(K,\wt \grc)$ by
\begin{equation}
  \label{eq:NKc}
   N(K,\wt \grc):=\frac{N_{K/k}(\A_K^\times)}
   {N_{K/k}(K^\times \wh \grO^\times K_\infty(\wt \grc)^\times)}.
\end{equation}
Consider the commutative diagram of two short exact sequences (by
Hilbert's Theorem 90)
\begin{equation}
  \label{eq:N_CD}
  \begin{CD}
  1 @>>> \A_K^{\times 1-\sigma}\cap U @>>> U @>{N_{K/k}}>>
  N_{K/k}(U) @>>> 1 \\
  @. @VVV @VVV @VVV \\
  1 @>>> \A_K^{\times 1-\sigma} @>>> \A_K^\times @>{N_{K/k}}>>
  N_{K/k}(\A_K^\times) @>>> 1,  \\    
  \end{CD}
\end{equation}
where $U=K^\times \wh \grO^\times K_\infty(\wt \grc)^\times$. The
snake lemma gives the short exact sequence
\begin{equation}
  \label{eq:Cl_N}
  \begin{CD}
    1 @>>> \Cl(K,\wt \grc)^{1-\sigma} @>>> \Cl(K,\wt \grc) @>>>
    N(K,\wt \grc) @>>> 1 
  \end{CD}
\end{equation}
as one has an isomorphism $\A_K^{\times 1-\sigma}/(\A_K^{\times 1-\sigma}\cap U)\simeq  \Cl(K,\wt \grc)^{1-\sigma}$. 
On the other hand we have the short exact sequence 
\begin{equation}
  \label{eq:ClG}
  \begin{CD}
    1 @>>> \Cl(K,\wt \grc)^G @>>> \Cl(K,\wt \grc) @>>>
    \Cl(K,\wt \grc)^{1-\sigma} @>>> 1,
  \end{CD}
\end{equation}
which with (\ref{eq:Cl_N}) shows the following result.
\begin{lemma}\label{ClG=N}
  We have $\# \Cl(K,\wt \grc)^G= \# N(K,\wt \grc)$.
\end{lemma}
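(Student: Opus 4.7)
The plan is to extract the equality purely by a counting argument, combining the two short exact sequences (\ref{eq:Cl_N}) and (\ref{eq:ClG}) that have already been set up. Since $\Cl(K,\wt\grc)$ is a finite abelian group (being a narrow ray class group of a number field), all the subquotients appearing in both sequences are finite, and multiplicativity of orders applies.

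First, from the sequence (\ref{eq:Cl_N}) (produced via the snake lemma applied to (\ref{eq:N_CD})) I would deduce
\[
\#\Cl(K,\wt\grc) \;=\; \#\Cl(K,\wt\grc)^{1-\sigma}\cdot \#N(K,\wt\grc).
\]
Next, from the sequence (\ref{eq:ClG}) — which simply records that the endomorphism $1-\sigma$ of $\Cl(K,\wt\grc)$ has kernel equal to the ambiguous subgroup $\Cl(K,\wt\grc)^G$ and image equal to $\Cl(K,\wt\grc)^{1-\sigma}$ — I would deduce
\[
\#\Cl(K,\wt\grc) \;=\; \#\Cl(K,\wt\grc)^G \cdot \#\Cl(K,\wt\grc)^{1-\sigma}.
\]

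Dividing these two equalities cancels the common factor $\#\Cl(K,\wt\grc)^{1-\sigma}$ and yields $\#\Cl(K,\wt\grc)^G = \#N(K,\wt\grc)$, which is the claim.

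There is essentially no obstacle here: the real work was already invested in identifying $\A_K^{\times\,1-\sigma}/(\A_K^{\times\,1-\sigma}\cap U)\simeq \Cl(K,\wt\grc)^{1-\sigma}$ and thereby producing (\ref{eq:Cl_N}) from the snake lemma on the diagram (\ref{eq:N_CD}). The only small point to verify is the finiteness of $\Cl(K,\wt\grc)$ so that the cancellation is legitimate; this is standard. Note that the lemma is merely a statement about orders (not a canonical isomorphism), which is precisely why the elementary counting suffices.
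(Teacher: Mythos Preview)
Your argument is correct and is exactly the approach taken in the paper: the lemma is stated as an immediate consequence of combining the two short exact sequences (\ref{eq:Cl_N}) and (\ref{eq:ClG}), using finiteness of $\Cl(K,\wt\grc)$ to cancel the common factor $\#\Cl(K,\wt\grc)^{1-\sigma}$.
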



Define
\[ \Cl(k,\grc, \grO):=\frac{\A_k^\times}{k^\times
   k_\infty(\grc)^\times N_{K/k}(\wh \grO^\times)}. \]

\begin{lemma}\label{N_Cl}
   The group $N(K,\wt \grc)$ is isomorphic to a subgroup $H\subset
   \Cl(k,\grc, \grO)$ of index $[K:k]$. 
\end{lemma}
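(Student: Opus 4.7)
The plan is to construct a natural homomorphism
\[ \phi : N(K,\wt \grc) \longrightarrow \Cl(k,\grc,\grO) \]
induced by the inclusion $N_{K/k}(\A_K^\times) \subset \A_k^\times$, and then identify the image $H$ as a subgroup of index $[K:k]$.

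First I would check that $\phi$ is well-defined, i.e. that
\[ N_{K/k}(U) \;\subset\; k^\times k_\infty(\grc)^\times N_{K/k}(\wh\grO^\times), \]
where $U=K^\times \wh\grO^\times K_\infty(\wt\grc)^\times$. This is straightforward: $N_{K/k}(K^\times)\subset k^\times$, $N_{K/k}(\wh\grO^\times)$ is already one of the factors on the right, and by the very definition of $\grc$ recalled in the introduction one has $N_{K/k}(K_\infty(\wt\grc)^\times) = k_\infty(\grc)^\times$.

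The main obstacle is the injectivity of $\phi$, and this is where Hasse's norm theorem enters. Suppose $N_{K/k}(\alpha)$ lies in the denominator $k^\times k_\infty(\grc)^\times N_{K/k}(\wh\grO^\times)$, say $N_{K/k}(\alpha) = a\cdot b \cdot N_{K/k}(c)$ with $a\in k^\times$, $b\in k_\infty(\grc)^\times$, $c\in \wh\grO^\times$. Using $b = N_{K/k}(b')$ for some $b'\in K_\infty(\wt\grc)^\times$, I obtain $a = N_{K/k}(\alpha c^{-1}b'^{-1}) \in k^\times \cap N_{K/k}(\A_K^\times)$. Because $K/k$ is cyclic, the Hasse norm theorem gives $k^\times \cap N_{K/k}(\A_K^\times)=N_{K/k}(K^\times)$, so $a=N_{K/k}(a')$ for some $a'\in K^\times$, and therefore $N_{K/k}(\alpha) = N_{K/k}(a'cb')\in N_{K/k}(U)$. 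This shows $\phi$ is injective, so $N(K,\wt\grc)\cong H:=\im \phi$.

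Finally, I would compute the index of $H$ in $\Cl(k,\grc,\grO)$. Since $k_\infty(\grc)^\times$ and $N_{K/k}(\wh\grO^\times)$ are both contained in $N_{K/k}(\A_K^\times)$, the subgroup $N_{K/k}(\A_K^\times)\cdot k^\times k_\infty(\grc)^\times N_{K/k}(\wh\grO^\times)$ simplifies to $k^\times N_{K/k}(\A_K^\times)$. Hence
\[ [\Cl(k,\grc,\grO) : H] \;=\; [\A_k^\times : k^\times N_{K/k}(\A_K^\times)], \]
and the global norm index theorem for the cyclic extension $K/k$ says this index equals $[K:k]$, completing the proof.
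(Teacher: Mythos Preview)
Your argument is correct and is essentially the same as the paper's: with $A=N_{K/k}(\A_K^\times)$, $B=N_{K/k}(U)$, $C=k^\times$, the paper observes $A\cap C=N_{K/k}(K^\times)\subset B$ via the Hasse norm theorem and then invokes the isomorphism $A/B=A/(A\cap C)B\simeq CA/CB=H$, while you unpack this same isomorphism by hand at the element level; both then compute $[\A_k^\times:k^\times N_{K/k}(\A_K^\times)]=[K:k]$ by the global norm index theorem.
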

\begin{proof}
  Put $A:=N_{K/k}(\A_K^\times)$,
  $B:=N_{K/k}(K^\times \wh \grO^\times K_\infty(\wt \grc)^\times)$, 
$C:=k^\times$ and $H:=CA/CB$. The group $H$ is a subgroup in
$\Cl(k,\grc,\grO)$, which is 
of index $[K:k]$ by the global norm index theorem 
\cite[p.~193]{lang:ant}.
One has $A\cap C=N_{K/k}(K^\times)\subset B$ 
by the Hasse norm theorem \cite[p.~195]{lang:ant}.
The lemma follows from 
\[ N(K,\wt \grc)=A/B = A/(A\cap C)B \simeq CA/CB=H. \text{\qed} \]

\end{proof}

Consider the exact sequence
\begin{equation}
  \label{eq:final_exact}
  \begin{CD}
    1 @>>> \frac{\gro(\grc)^\times}{\gro(\grc)^\times\cap N(\wh
    \grO^\times)}@>>> \frac{\wh \gro^\times}{N(\wh \grO^\times)} @>>>
    \Cl(k,\grc, \grO) @>>> \Cl(k,\grc) @>>> 1.  
  \end{CD}
\end{equation}
It is easy to see 
$\gro(\grc)^\times\cap N_{K/k}(\wh \grO^\times)
=\gro(\grc)^\times\cap N_{K/k}(K^\times)$ from the Hasse
    norm theorem. The local norm index theorem 
\cite[p.~188, Lemma 4]{lang:ant}
gives     
\begin{equation}
      \label{eq:loc_norm_ind}
\#\left(\frac{\wh \gro^\times}{N(\wh \grO^\times)}\right) 
   =\prod_{v\in V_f} e(v).          
\end{equation}
Combining Lemma~\ref{N_Cl}, (\ref{eq:final_exact}) and 
(\ref{eq:loc_norm_ind}) we get
\begin{equation}
  \label{eq:form_N}
  \# N(K,\wt \grc)=\frac{\#\Cl(k,\grc,\grO)}{[K:k]}=
   \frac{\#\Cl(k,\grc) \prod_{v\in V_f}
    e(v)}{[K:k][\gro(\grc)^\times : \gro(\grc)^\times \cap
    N_{K/k}(K^\times)]}.
\end{equation}
Theorem~\ref{form_ClG} follows from Lemma~\ref{ClG=N} and
(\ref{eq:form_N}). \qed
 
\begin{remark}
  We do not know whether $\Cl(K,\wt \grc)^G$ and $N(K,\wt \grc)$ are
  isomorphic as abelian groups or whether 
  there is a natural bijection between
  them. When $[K:k]=2$ and $\# \Cl(K,\wt
  \grc)^{1-\sigma}$ is odd, we show that there is a natural
  isomorphism  
  \begin{equation}
    \label{eq:Cl=N}
    N(K,\wt \grc) \simeq \Cl(K,\wt \grc)^G. 
  \end{equation}
The map $1-\sigma: \Cl(K,\wt \grc) \to \Cl(K,\wt \grc)^{1-\sigma}$
restricted to $\Cl(K,\wt \grc)^{1-\sigma}$ is the squared map 
${\rm Sq}$, which is an isomorphism from our assumption. 
The inverse of ${\rm Sq}$ defines a section of (\ref{eq:ClG}), 
and hence an isomorphism 
$\Cl(K,\wt \grc)\simeq \Cl(K,\wt \grc)^G\oplus \Cl(K,\wt
\grc)^{1-\sigma}$. The assertion (\ref{eq:Cl=N}) then follows.  
\end{remark}
\section*{Acknowledgments}

The present work is done while the author's stay in the Max-Planck-Institut
f\"ur Mathematik. He is grateful to the Institut for kind hospitality
and excellent working environment. 
The author is partially supported by the grants 
MoST 100-2628-M-001-006-MY4 and 103-2918-I-001-009.

\end{document}